\documentclass[letterpaper, 10 pt, conference]{ieeeconf}  

\IEEEoverridecommandlockouts                              
\usepackage{tikz}
\usepackage{subcaption}
\usepackage{booktabs}
\usepackage{url}
\usepackage{cite}
\usepackage{amsmath,amssymb,amsfonts}
\usepackage{algorithmic}
\usepackage{graphicx}
\usepackage{textcomp}
\usepackage{xcolor}
\def\BibTeX{{\rm B\kern-.05em{\sc i\kern-.025em b}\kern-.08em
    T\kern-.1667em\lower.7ex\hbox{E}\kern-.125emX}}
\newtheorem{thm}{Theorem }
\newtheorem{prop}{Proposition}
\newtheorem{cor}{Corollary}
\newtheorem{lem}{Lemma }
\newtheorem{defn}{Definition}
\newtheorem{rem}{Remark }
\newtheorem{ass}{Assumption}
\newtheorem{ex}{Example}

\usepackage[ruled,vlined,linesnumbered]{algorithm2e}
\usepackage{makecell}
\usepackage{dsfont}

\newcommand{\Real}{\mathbb R}
\newcommand{\B}{\mathbb B}
\newcommand{\eps}{\varepsilon}

\newcommand{\norm}[1]{\left\Vert#1\right\Vert}
\newcommand{\ra}{\rightarrow}
\newcommand{\set}[1]{\left\{#1\right\}}

\newcommand{\D}{\mathcal{D}}
\newcommand{\U}{\mathcal{U}}

\newcommand{\R}{\mathcal{R}}

\renewcommand{\S}{\mathcal{S}}

\newcommand{\Sd}{\mathcal{S}_\delta}

\newcommand{\Ds}{\mathcal{D}_{\operatorname{s}}}
\newcommand{\Dd}{\mathcal{D}_{\operatorname{0}}}

\newcommand{\dx}{\dot{x}}
\usepackage{xcolor}
\newcommand{\ymmark}[1]{{\color{blue} #1}}

\newtheorem{eg}[thm]{\bf Example}
\newcommand{\qed}{\hfill $\square$}

\title{\LARGE \bf Characterization of Safe Stabilization and Control Lyapunov–Barrier Functions via Zubov Equation Formulation}

\author{Yiming Meng and Jun Liu 
\thanks{This research was supported in part by an NSERC Discover Grant and the Canada Research Chairs program. The authors also acknowledge the Mathematics Faculty Computing Facility (MFCF) at the University of Waterloo for computing support.}
\thanks{Yiming Meng is with the AI Thrust, Information Hub, Hong Kong
University of Science and Technology (Guangzhou), China.   Email: \texttt{yimingmeng@hkust-gz.edu.cn}}
\thanks{Jun Liu is with the Department of Applied Mathematics, Faculty of Mathematics, University of Waterloo, Waterloo, Ontario N2L 3G1, Canada.  Email: \texttt{j.liu@uwaterloo.ca}}
}

\begin{document}

\maketitle
\thispagestyle{empty}
\pagestyle{empty}

\begin{abstract}
Design and analysis of stabilizing controllers with safety guarantees for nonlinear systems have received considerable attention in recent years. Control Lyapunov–barrier functions (CLBFs) provide a powerful framework for simultaneously ensuring stability and safety; however, their construction for nonlinear systems remains challenging. To address this issue, we build on recent advances in PDE-based characterizations of control Lyapunov functions and Lyapunov–barrier functions for autonomous systems, and propose a succinct Zubov–HJB PDE formulation for safe stabilization of nonlinear control-affine systems under a common compatibility assumption. We further show that the viscosity solution of this PDE yields a maximal CLBF, enabling (not necessarily continuous) feedback synthesis with stability and safety guarantees. In light of recent advances in neural-network-based methods for solving Zubov-type PDEs, this theoretical framework also provides a natural interface to emerging numerical approaches.
\end{abstract}
\begin{keywords}
Control Lyapunov-barrier function,  Zubov equation, safe stabilization, PDE characterization. 
\end{keywords}

\section{Introduction}

Control synthesis for stabilization with safety remains a fundamental and challenging problem in modern control theory and has received increasing attention in a variety of safety-critical applications, such as robotic motion planning and trajectory regulation \cite{fribourg2013control, faulwasser2009model,nilsson2017augmented}. 

Control Lyapunov functions (CLFs) provide a powerful tool for analyzing asymptotic controllability and stabilization, supported by classical converse results and constructive feedback designs for control-affine systems. Similarly, control barrier functions (CBFs) are used to enforce safety constraints \cite{ames2019control, agrawal2017discrete,ames2016control,hsu2015control,nguyen2020dynamic, 9483376}. To achieve stabilization and safety simultaneously, optimization-based approaches, most notably quadratic programming (QP) frameworks, have been proposed to combine Lyapunov and barrier certificates for the real-time synthesis of control inputs. However, these methods do not always provide guarantees, due to potential conflicts between the stabilization objective and safety constraints.

The main challenge lies in ensuring compatibility between the CLF and CBF conditions so that the optimization problem remains feasible. When such compatibility is absent, stability is typically relaxed and treated as a soft constraint. An alternative approach is to merge a CLF and a CBF into a single control Lyapunov–barrier function (CLBF) \cite{romdlony2016stabilization, meng2022smooth}, which serves as a unified Lyapunov-like certificate. When such a function is continuously differentiable, Sontag’s universal formula can be applied to construct a stabilizing feedback that also enforces safety \cite{sontag1989universal}.

Under the compatibility assumption, recent works have studied the construction of continuously differentiable CLBFs \cite{ong2019universal, mestres2022optimization, quartz2025converse, mestres2025converse}. However, such constructions require several stringent conditions. We summarize a hierarchy of underlying causes that explain why continuous differentiability is restrictive. (1) Due to the equivalence between a single smooth CLBF and a continuously differentiable CLF-CBF pair, the existence of a smooth CLBF necessarily requires a continuously differentiable CLF \cite{sontag1999stability}. This, in turn, requires the existence of a continuous stabilizing feedback law, which is generally not guaranteed. (2) Even when a smooth CLF-CBF pair exists, continuous feedback for stabilization with safety may fail to exist in the presence of bounded obstacles. In particular, as shown in \cite{braun2017existence}, topological obstructions can arise. (3) Even when the boundedness of the obstacle is no longer an issue, the geometric structure of the safe set and the topology of the system flow may still prevent the existence of a continuous stabilizing feedback. These observations motivate the need for a more general, possibly nonsmooth, formulation of CLBFs.

In addition to the difficulty of ensuring differentiability of CLBFs (or equivalently, a CLF–CBF pair), the challenge of constructing a CLF defined on a nonlocal domain raises further concerns that the resulting feedback may guarantee only local stabilization with safety. Motivated by this issue, in \cite{li2025solving}, a discounted stabilize–avoid value function is proposed, which certifies safety at the set level but does not induce a Lyapunov–barrier feedback law guaranteeing finite-time reachability and safety. The work in \cite{grune2015zubov} adopts a PDE-based formulation but relies on a nontrivial terminal cost, resulting in less convenient boundary conditions and a Hamilton–Jacobi equation that is not readily solvable with existing computational tools. The work in \cite{quartz2025converse} provides an explicit formula for a CLBF; however, it necessarily relies on two functions that solve two HJ-type PDEs. Recent work \cite{meng2025towards} has developed a Zubov PDE   formulation for constructing a unified Lyapunov–barrier function (for autonomous systems) by solving a Dirichlet boundary value problem associated with the Zubov equation, defined via a proper indicator function of the obstacle set. The resulting Lyapunov–barrier function is defined on the maximal domain where the system admits stabilization under safety constraints. Moreover, it has been shown to be a maximal Lyapunov function for a suitably modified dynamical system, which enables the use of existing tools, such as \cite{liu2024lyznet, liu2025physics}, to learn near-maximal Lyapunov–barrier functions. 
Inspired by this idea, we propose a Zubov–HJB formulation for control-affine systems that yields a (not necessarily smooth) CLBF. The construction of CLFs and CBFs is reduced to a single PDE, whose solvability is equivalent to the existence of a continuous CLBF and whose solution behaves as a CLF on the safe null-controllability domain.

A recent renaissance in Hamilton–Jacobi (HJ) type PDE methods has provided a promising alternative for constructing Lyapunov and control Lyapunov functions \cite{camilli2008control, grune2023examples, liu2025formally, meng2024physics}. 
In particular, \cite{liu2025formally} has investigated Zubov-type equations for control systems using physics-informed neural networks (PINNs) to compute CLFs. This approach enables formal verification via SMT solvers and has been shown to outperform traditional SOS and rational CLF methods, while explicitly incorporating the Zubov-HJB PDE structure. In this view, the proposed framework also serves as an important interface with emerging numerical methods.
 
\textbf{Notation:} We list some notation used in this paper. 
For $x\in\Real^n$ and $r\ge 0$, we denote the ball of radius $r$ centered at $x$ by $\B(x;r)=\set{y\in\Real^n:\,\norm{y-x}\le r}$, where $\norm{\cdot}$ is the Euclidean norm. For a closed set $A\subset\Real^n$ and $x\in\Real^n$, we denote the distance from $x$ to $A$ by $\norm{x}_{A}=\inf_{y\in A}\norm{x-y}$. 
For a set $A\subseteq\Real^n$, $\overline{A}$ denotes its closure.  
For two sets $A,B\in\Real^n$, we use $A\setminus B$ to denote the set difference defined by $A\setminus B=\set{x:\,x\in A,\,x\not\in B}$.  We also write $a\gtrsim b$ if there exists a $C>0$ (independent of $a$ and $b$) such that $a\geq Cb$.

\section{Preliminaries}\label{sec:prelim}
\subsection{Dynamical Systems and Stabilization-with-Safety}\label{sec: pre}
Consider a continuous-time dynamical system 
\begin{equation}\label{eq:sys}
     \dx  = f(x) + g(x)u
\end{equation}
where $f:\,\Real^{n}\to\Real^{n}$ and $g:\,\Real^{n}\to \Real^{n\times m}$ are assumed to be locally Lipschitz; $x\in\Real^n$ is the state, $u\in \Real^m$ is the control input.  For simplicity, in the context of controlled trajectory, we overload the notation $u$ as the control signal,   i.e. $u:[0,\infty)\ra \Real^m$; the unique solution starting from $x_0$ under the control $u$ is denoted by $\phi(t; x_0, u)$. We may also write the solution as $\phi(t)$ or $\phi$ if the omitted arguments are not emphasized. We are interested in the case where the maximal interval of existence is $[0,\infty)$ for admissible controls $u\in\U:=L^\infty([0, \infty);\Real^m)$.

Without loss of generality, we also assume $f(0) = 0$. Denote $\mathcal{X}_0\subseteq\Real^n$ as the set of initial conditions and $U\subseteq\Real^n$ as the obstacle. Throughout the paper, we also denote $S=U^c$ for simplicity. We then  consider safe stabilization w.r.t.  the origin as follows. 
\begin{defn}[Stabilization-with-safety guarantee]
We say that \eqref{eq:sys} satisfies a stability-with-safety specification $(\mathcal{X}_0,U)$ if, for all $x \in \mathcal{X}_0$, there exists an admissible control signal $u\in\U$ such that $\lim_{t\ra\infty}\|\phi(t;x,u)\|\ra 0$ and $\phi(t;x,u) \notin U$ for all $t \ge 0$. \qed
\end{defn}

Analogously to the domain of null-controllability, which is defined as
\begin{equation*}
    \D_0:= \set{x\in\Real^n: \exists u\in\U \text{ s.t. } \lim_{t\ra\infty}\|\phi(t;x,u)\|\ra 0}, 
\end{equation*}
we define the domain of null-controllability with safety constraint   as
\begin{equation*}
    \begin{split}
        \D:= &
\left\{
x\in\mathbb{R}^n :
\exists u\in\mathcal U \text{ s.t. }
\lim_{t\ra\infty}\|\phi(t;x,u)\|\ra 0 \right.\\
&  \left. \text{ and }
\phi(t;x,u)\notin U \ \forall t\ge0
\right\}.
    \end{split}
\end{equation*}

To this end,  we assume   local stabilizability of \eqref{eq:sys}. 
\begin{ass}\label{ass: local}
    We assume that there exists an open ball $\B(0;r)$, a constant $k>0$, and $\beta\in\mathcal{KL}$ such
that for any $x\in\B(0;r)$ there exists $u\in\U$ with $\|u\|_\infty\leq k$, and $\|\phi(t;x,u)\leq \beta(\|x\|,t)$ for all $t\geq 0$. \qed
\end{ass}

For future reference, we   make the following assumptions on $S$ to facilitate the analysis.
\begin{ass}\label{ass: h}
    We assume 
    \begin{enumerate}
        \item $S = \set{x\in\Real^n: h(x)<1}$ with some locally Lipschitz continuous function $h$; 
        \item $S$ is open and connected;
        \item $\B(0;r)\subseteq S\subseteq\D_0$, where $\B(0;r)$ is the local stabilization region as required in Assumption \ref{ass: local}. \qed
    \end{enumerate}
\end{ass}

We also define the exit time from $S$, or equivalently entry time in $U$, as
\begin{equation}
    \tau(x, u) = \inf\set{t\geq 0: \phi(t;x,u)\in U}, 
\end{equation}
with the convention that $\inf\emptyset = +\infty$. A closely related notion of weak invariance is defined as follows:
\begin{equation}
    \Ds:=\set{x\in\Real^n: \exists u\in\U \text{ s.t. } \tau(x,u) = \infty}. 
\end{equation}

\begin{rem}
    Since, by definition, $\D\subseteq\D_0\cap\D_s$, and 
$\D_s\subseteq S$, it follows that $\D\subseteq \D_0\cap S$. To study the behavior on $\D$, the only relevant part is $\D_0\cap S$. The assumption that $S\subseteq \D$ is adopted for convenience, since in any case we do not consider the regions $\D_0\setminus S$ or $S\setminus\D_0$. We would like to clarify that the common assumptions in \cite{ong2019universal, mestres2025converse, quartz2025converse} all explicitly or implicitly require $S\subseteq\D_0$. \qed
\end{rem}

\subsection{Nonsmooth Value Functions for Asymptotic Controllability and Controlled Forward Invariance}\label{sec: clf_cbf}

Continuously differentiable CLFs and CBFs provide a classical framework for stabilization and safety, with necessary and sufficient existence conditions. Their combination into a CLBF is appealing for enforcing stabilization with safety. However, differentiability imposes restrictive requirements, including continuous feedback and geometric or topological conditions on the safe set, which often fail in practice. In contrast, nonsmooth CLFs and CBFs can characterize asymptotic controllability and safety while permitting discontinuous feedback. We introduce these notions using the concept of the proximal subgradient, defined below.

\begin{defn}
    For any $\varphi\in\mathcal{C}(\Real^n)$ and $x\in\operatorname{dom}(\varphi)$, a vector $\zeta\in\Real^n$  is called a proximal subgradient of 
$\varphi$ at $x$ if there exist constants $\sigma, \eta>0$ such that 
\begin{equation}
    \varphi(y)\geq \varphi(x)+\langle \zeta, y-x\rangle - \sigma \|y-x\|^2, \;\forall y\in\B(x;\eta). 
\end{equation}
The set of all proximal subgradients of 
$\varphi$ at $x$ is denoted by $\partial_P\varphi(x)$. \qed
\end{defn}

Converse theorems state that the system \eqref{eq:sys} is asymptotically controllable if and only if there exists a (not necessarily smooth) CLF $V\in\mathcal{C}(\D_0)$ such that $V$ is positive definite on $\D$, and  there exist a positive definite continuous function $w$ and a  $\sigma\in\mathcal{K}_\infty$ such that for all $x\in\D_0$, 
\begin{equation}\label{E: clf}
    \inf_{\|u\|\leq \sigma(\|x\|)}\langle\zeta, f(x)+g(x)u\rangle \leq -w(x), \;\forall\zeta\in \partial_PV(x). 
\end{equation}

Similar to the generalized notion of CLFs above, we now present a nonsmooth version of CBFs. Let $h$ be the function defining $S$  (as in Assumption \ref{ass: h}). Then $h$ is called a CBF for system \eqref{eq:sys} w.r.t $S$ if there exists a locally Lipschitz class $\mathcal{K}_\infty$ function $\alpha$ such that for all $x\in S$, 
\begin{equation}\label{E: cbf}
    \inf_{u\in\Real^m} \langle \zeta, f(x)+g(x)u\rangle\leq \alpha(1-h(x)), \;\forall\zeta\in \partial_Ph(x). 
\end{equation}

\begin{rem}\label{rem: equiv}
    Based on the connections between proximal subgradients and other generalized derivatives for characterizing nonsmooth regularity \cite{clarke1998nonsmooth}, there are several equivalent formulations of nonsmooth CLFs and CBFs. For instance, one may use the lower right Dini directional derivative to replace the left-hand side of \eqref{E: clf} and \eqref{E: cbf} (e.g., the complete CLF concept in \cite{braun2017existence}), or adopt subdifferential and viscosity supersolution formulations (e.g., Eq.~(1.2) in \cite{camilli2008control}).
   We adopt proximal conditions, as they provide a natural geometric perspective at nondifferentiable points, are linear in $f(x)+g(x)u$, and play an important role in constructing discontinuous feedback \cite{sontag1999stability}. Note that all of the above formulations reduce to the classical  notion when the value functions are   differentiable. \qed
\end{rem}

\subsection{Problem Formulation}\label{sec: Prob}
We aim to: (i)
Characterize a CLF V defined on the entire
domain $\D$ via a Lyapunov-like equation formulation; (ii)
Construct a CLBF $W$ such that $W$ is
positive definite and satisfies
\begin{enumerate}
    \item $\inf_{u\in\Real^m}\langle\zeta, f(x)+g(x)u\rangle<0$ for all $x\in\D\setminus\{0\}$ and for all $\zeta\in\partial_PW(x)$;
    \item $W(x)<1$ for all $x\in\D$ and $W\geq 1$ for all $x\in U$. 
\end{enumerate}

\section{Fundamental Properties   of Domain of Null-controllability with Safety
Constraint}
Before characterizing stabilization-with-safety guarantees and deriving the converse Lyapunov–barrier function, we use this short section to present basic properties of
 $\D$. Recall the related definitions and assumptions in Section \ref{sec: pre}.  

\begin{prop}
   Under Assumption \ref{ass: local} and \ref{ass: h},  $\D$ is open and connected. 
\end{prop}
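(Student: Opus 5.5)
Openness and connectedness will be handled separately. Both rest on one basic fact: every $x\in\D$ can be steered by some $u\in\U$ along a trajectory that stays in the open set $S$ and converges to $0$. In particular the trajectory enters the interior of the local stabilization ball $\B(0;r)$ in finite time. Once there, Assumption \ref{ass: local} applies.

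\emph{Openness.} Fix $x\in\D$ with an admissible $u$. Choose $T>0$ such that $\beta(\|\phi(T;x,u)\|,t)$ stays inside a ball whose closure lies in $S$ for all $t\ge0$. This is possible because $\B(0;r)\subseteq S$, $S$ is open, and $\beta(s,0)\to0$ as $s\to0$. Then choose $\rho>0$ with $\B(0;\rho)\subseteq\B(0;r)$ and $\beta(\rho,0)$ small enough, and pick $T$ with $\|\phi(T;x,u)\|<\rho/2$. The compact arc $K=\{\phi(t;x,u):t\in[0,T]\}$ lies in $S$, so it has positive distance $d$ to the closed set $U$.

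Since $f,g$ are locally Lipschitz and $u$ is essentially bounded, continuous dependence on initial conditions holds on $[0,T]$ via Gronwall's inequality on a compact neighborhood of $K$. So there is $\delta>0$ such that every $y\in\B(x;\delta)$ satisfies $\|\phi(t;y,u)-\phi(t;x,u)\|<\min\{d,\rho/2\}$ for $t\in[0,T]$. Hence $\phi(t;y,u)\in S$ on $[0,T]$ and $\|\phi(T;y,u)\|<\rho$.

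After time $T$, concatenate with the control from Assumption \ref{ass: local} issued from $\phi(T;y,u)$. The resulting trajectory obeys the $\mathcal{KL}$ bound, so it stays in $\B(0;\beta(\rho,0))\subseteq S$ and converges to $0$. The concatenated control is in $L^\infty$, so $y\in\D$.

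\emph{Connectedness.} I will show that $\D$ is path-connected through the origin. First, $0\in\D$ by Assumption \ref{ass: local}. Take $x\in\D$ with its admissible control. The curve $t\mapsto\phi(t;x,u)$ on $[0,\infty)$, together with the point $0$ at $t=\infty$, is a continuous path from $x$ to $0$ after reparametrizing $[0,\infty]$ to $[0,1]$. Continuity at the endpoint follows from convergence to $0$.

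This path lies entirely in $\D$. Each point $\phi(s;x,u)$ belongs to $\D$ by using the shifted control $u(\cdot+s)$, which is admissible and keeps the tail trajectory in $S$ while converging to $0$. So every point of $\D$ is joined to $0$ inside $\D$, and $\D$ is path-connected, hence connected.

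Note that this argument does not use connectedness of $S$. I expect the main technical obstacle to be the uniform choice of $\delta$ in the openness step. This needs the careful Gronwall estimate on a compact tube around $K$ that stays inside $S$, together with the selection of $\rho$ so that the local $\mathcal{KL}$ bound keeps trajectories inside $S$.
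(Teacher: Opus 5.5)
Your proof is correct and follows essentially the same route as the paper: openness comes from continuous dependence over a finite horizon that brings the trajectory into the local stabilization ball, using the positive distance of the compact arc from $U$, and connectedness comes from trajectories that lie entirely in $\D$. Your explicit choice of $\rho$ with $\B(0;\beta(\rho,0))\subseteq S$ adds care the paper lacks: the paper simply asserts $\B(0;r)\subseteq\D$ from Assumption~\ref{ass: local}, but a trajectory starting in $\B(0;r)$ is only guaranteed to stay in $\B(0;\beta(r,0))$, which need not lie in $S$, so the ball must be shrunk exactly as you do.
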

\begin{proof}
By Assumption \ref{ass: local}, there exists some $r>0$ such that $\B(0;r)\subseteq\D$, and every point in $\B(0;r)$ can be steered to the origin.  We aim to show that, for any $x\in \D$, there exists $\rho>0$ such that $\B(x;\rho)\subset\D$.
    Take any $x\in\D$. By definition, there exists $u_x\in\U$ such that $\phi(t;x,u_x)\in S$ for all $t\geq 0$,  and there exists a $T_x\in(0, \infty)$ such that $\phi(T_x;x,u_x)\in\B(0;r/2)$. Note that the set $\set{\phi(t;x,u_x), 0\leq t\leq T_x}$ is compact, and there exists $\delta_x>0$ such that $\|\phi(t;x,u_x)\|_U\geq \delta_x$ for all $t\in[0, T_x]$. 
 
  Now, let $u_z\in\U$ be such that $u_z(t) = u_x(t)$ for all $t\in[0, T_x]$. By the continuity of the trajectory, there exists a $\rho>0$ such that whenever $z\in\B(x;\rho)$, $\|\phi(t;x,u_x)-\phi(t;z,u_z)\|<\min\{\delta_x/2, r/2\}$ for all $t\in[0, T_x]$. We then have $\|\phi(t;z,u_z)\|_U\geq \|\phi(t;x,u_x)\|_U-\|\phi(t;x,u_x)-\phi(t;z,u_z)\|\geq \delta_x/2$ for all $t\in[0, T_x]$. On the other hand, we notice that $\|\phi(T_x;x,u_x)-\phi(T_x;z,u_z)\|<r/2$, which implies $\phi(T_x;z,u_z)\in \B(0;r)$, and therefore $z\in\D$. The openness of $\D$ thus follows.

  Since $\B(0;r)\subseteq \D$ is connected and every $x\in\D$ can be joined to $\B(0;r)$ by a trajectory lying entirely in $\D$, it follows that $\D$ is path-connected, and hence connected.
\end{proof}

\begin{prop}\label{prop: boundary}
    For any $y\in\partial \D$, it must fall into one of the following three cases: (1) $y\in\partial\D_0$; (2) $y\in \partial \Ds \cap \D_0$; (3) $y\in (\Ds\cap\D_0)\setminus \D$. 
\end{prop}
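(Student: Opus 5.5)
This is essentially a case split on where $y$ sits relative to $\D_0$ and $\Ds$, driven by $\D\subseteq\D_0\cap\Ds$ and the openness of $\D$.

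Take $y\in\partial\D$. Since $\D$ is open by the preceding proposition, $y\notin\D$. At the same time, there is a sequence $x_k\in\D$ with $x_k\to y$. By the definition of $\D$, every $x_k$ lies in both $\D_0$ and $\Ds$. Hence $y\in\overline{\D_0}\cap\overline{\Ds}$.

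I will first split according to whether $y\in\D_0$.

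\emph{Case $y\notin\D_0$.} Then $y$ is a limit of the points $x_k\in\D_0$ but is not itself in $\D_0$. So $y\in\overline{\D_0}\setminus\D_0\subseteq\partial\D_0$, which is case (1).

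\emph{Case $y\in\D_0$.} I split again according to whether $y\in\Ds$.

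If $y\notin\Ds$, then $y\in\overline{\Ds}\setminus\Ds\subseteq\partial\Ds$. Together with $y\in\D_0$ this gives $y\in\partial\Ds\cap\D_0$, which is case (2).

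If $y\in\Ds$, then $y\in\Ds\cap\D_0$ while $y\notin\D$. This is exactly case (3).

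These three subcases are exhaustive, so the proposition follows.

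The only step that needs any care is $y\notin\D$, which is where openness of $\D$ from the previous proposition is used. The remaining steps are just $\overline{A}\setminus A\subseteq\partial A$, applied with $A=\D_0$ and with $A=\Ds$.

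The cases need not be disjoint. For instance, $\D_0$ is open by an argument analogous to the previous proposition, so in case (1) $y$ is in fact never in $\D_0$. Disjointness is not required, since the statement only says $y$ falls into one of the cases.

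One point deserves comment. Case (3) can genuinely occur: a point may admit a safe control and a stabilizing control, but no single control that does both. That is why case (3) is stated as a separate category and not absorbed into the first two. No further argument is needed for it.
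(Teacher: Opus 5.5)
Your proof is correct and follows the paper's own case split ($y\notin\D_0$; $y\in\D_0\setminus\Ds$; $y\in\D_0\cap\Ds$). It is slightly cleaner in two ways: you make explicit that $y\notin\D$ follows from the openness of $\D$ (the paper uses this only tacitly in case (3)), and you note that $\overline{A}\setminus A\subseteq\partial A$ holds for any set, so the paper's appeal to the openness of $\D_0$ is unnecessary. Your side remark on disjointness is muddled, since openness of $\D_0$ actually shows that case (1) cannot overlap cases (2) and (3), but it plays no role in the argument.
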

\begin{proof}
    It is clear that $\D\subseteq\D_0\cap\D_s$. Therefore, for any point $y\in\partial\D$, it must be that either (1)  $y\notin \D_0$, or (2) $y\in\D_0\setminus \D_s$, or (3) $y\in(\D_0\cap\D_s)\setminus\D$. Then (3) in the statement   follows. For~(1), we have $\B(y;r)\cap\D\neq \emptyset$ for all $r>0$. Then, $\B(y;r)\cap\D_0\neq \emptyset$ for all $r>0$ and $y\in\overline{\D_0}$. Since $\D_0$ is open \cite{camilli2008control}, we have $y\in\overline{\D_0}\setminus\D_0=\partial\D_0$. For~(2), it falls in a similar manner as (1). Since $y\in\partial\D$ and $y\notin \D_s$,  every neighborhood of  $y$ intersects with $\D_s$ and $\D_s^c$, which implies $y\in\partial\D_s$. Together with $y\in\D_0$, we conclude that $y\in\D_0\cap\partial\D_s$. 
\end{proof}

    Note that for (3) of Proposition~\ref{prop: boundary}, stabilization and safety are individually achievable but never simultaneously by the same control.

\section{Converse Lyapunov-Barrier Function for Stability with Safety Guarantees} \label{sec:stability}

In this section, we  derive a set of PDE-based converse CLBF theorems for \eqref{eq:sys}, which characterizes a stabilization-with-safety guarantee specification $(\mathcal{X}_0, U)$ for some unsafe set $U\subset\Real^n$. 

\subsection{Key Concepts and Assumptions}
We first impose a restriction on $\Ds$ and $\D_0$ to eliminate geometric and dynamical obstructions to safe convergence to the origin.
\begin{ass}\label{ass: comp}
    Let $v:\D_0\ra\Real$ be a  CLF as defined in Section \ref{sec: clf_cbf}. We assume that  $\Gamma(x)\neq\emptyset$ for every $x\in \{x: 1-\eps\leq h(x)<1\}$ for some $\eps>0$, where $h$ is as in Assumption \ref{ass: h}, $\Gamma(x)$ is defined in \eqref{E: comp}, and $w_\epsilon$ in \eqref{E: comp} is continuous and positive. \qed
\end{ass}

 Assumption \ref{ass: comp} is weaker than that in \cite{mestres2022optimization, ong2019universal} in terms of the region where the inequalities hold, and the safe set in our paper is open, with a more general nonsmooth formulation. Another version of the compatibility assumption can be found in \cite{quartz2025converse}, which considers a compact safe set. 

We have the following immediate result.  
\begin{cor}\label{cor: boundary}
    Given Assumption \ref{ass: comp}, for any $y\in\partial\D$, $y\in\partial\D_0$ or $y\in\partial S\cap \D_0$.   \qed
\end{cor}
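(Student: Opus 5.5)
Starting from Proposition~\ref{prop: boundary}, every $y\in\partial\D$ falls into one of three cases: (1) $y\in\partial\D_0$, (2) $y\in\partial\Ds\cap\D_0$, or (3) $y\in(\Ds\cap\D_0)\setminus\D$. Case (1) is already one of the two alternatives in the claim. So the task reduces to showing two things under Assumption~\ref{ass: comp}: that case (3) cannot occur, and that in case (2) the point $y$ must lie on $\partial S$.

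The key auxiliary fact to establish first is
\[
S\cap\D_0\subseteq\D .
\]
Assumption~\ref{ass: comp} is meant to say that in the layer $\{1-\eps\le h<1\}$ there is a control direction that decreases the CLF $v$ and also keeps $h$ from reaching $1$. I would use it as follows.

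Fix $x\in S\cap\D_0$ and start from a control that steers toward the origin (it exists by asymptotic controllability, via the CLF $v$). Away from the layer, follow this CLF-based control. Inside the layer, choose controls from $\Gamma(x)$, which is nonempty there. These controls satisfy the CLF decrease inequality \eqref{E: clf} and the barrier inequality \eqref{E: cbf} simultaneously, so $v$ keeps decreasing while $h$ stays below $1$.

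To turn these pointwise inequalities into an actual trajectory, I would use the standard proximal-aiming/sampling (or Euler) feedback construction of \cite{sontag1999stability}, working on compact sublevel sets of $v$. The discontinuous feedback is chosen from $\Gamma$ near $\partial S$ and from the CLF condition elsewhere. The resulting sampled trajectories have $v$ decreasing to $0$ and $h<1$ throughout. Hence $\phi\to0$ and $\tau=\infty$, so $x\in\D$.

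Given this inclusion, case (3) is impossible, since $\Ds\subseteq S$ gives $\Ds\cap\D_0\subseteq S\cap\D_0\subseteq\D$. The same inclusion yields $\D=S\cap\D_0$, because $\D\subseteq S\cap\D_0$ always holds. Then any $y\in\partial\D$ with $y\in\D_0$ lies in $\overline{S}$. It cannot lie in $S$: otherwise $y\in S\cap\D_0=\D$, which is open, contradicting $y\in\partial\D$. Therefore $y\in\partial S\cap\D_0$, which handles case (2) and completes the corollary.

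The main obstacle is the rigorous proof of $S\cap\D_0\subseteq\D$. The difficulty is that the CLF/CBF conditions are stated only pointwise, with proximal subgradients, and there is no continuous feedback. The proof must control the sampling error near the boundary of the layer so that $h$ stays strictly below $1$. This can be done using the positivity and continuity of $w_\eps$ and the local Lipschitz bounds of $f$, $g$, and $h$ on compact sets, but only with a uniform positive margin. Such a margin does not come automatically for a non-compact set: either $\overline{S}$ must be compact, or the relevant sublevel set of $v$ must be bounded. I will first try to obtain this margin through a Lyapunov-type combination $\max\{v,\ \text{penalty of }h\}$ and apply the proximal-aiming lemma to that single function.
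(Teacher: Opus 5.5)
Your argument is essentially the paper's: via Proposition~\ref{prop: boundary}, everything reduces to the compatibility-driven inclusion $S\cap\D_0\subseteq\D$. The paper phrases this as excluding case (3) and $\Ds\cap\D_0=S\cap\D_0$, then uses openness of $\D_0$ where you use openness of $\D$; it draws the inclusion from its appendix by patching $\Gamma$-controls in the layer with CLF controls elsewhere, just as you propose.

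For the step you flag as the obstacle, the appendix gets safety without any margin by lifting to $\text{epi}(h)$ with $\dot z=\alpha(1-z)$, $z(0)=h(x_0)$, and invoking weak invariance, so that $h(x(t))\le z(t)<1$. It gets convergence from weak decrease of $v$ with a positive rate on a compact sublevel set away from the origin. A $\max\{v,\text{penalty}(h)\}$ function is not a good substitute: the barrier inequality only gives $\dot h\le\alpha(1-h)$, which is positive, so the penalty branch need not decrease.
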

\begin{proof}
   Recall Proposition \ref{prop: boundary}.  It can be verified that $\D_s \subseteq S \subseteq \D_0$. Moreover, by the properties established in the Appendix, the compatibility condition eliminates the possibility that stabilization and safety are individually achievable but not simultaneously achievable by the same control.  Hence, $y\notin(\D_s\cap\D_0)\setminus\D$.  Additionally, Assumption \ref{ass: comp} also implies that $\D_s\cap\D_0 = S\cap \D_0$, and since $\D_0$ is open, $y\in\partial\D_s\cap\D_0\implies y\in\partial S\cap \D_0$. 
\end{proof}

    The compatibility assumption is standard in the current literature and is sufficient for the following analysis of stabilization with safety. It is also a reasonable assumption in the sense that, to the best of the authors’ knowledge, although its necessity remains unproven, no counterexample has been constructed to date. We therefore adopt this assumption.

 To proceed, we introduce the following  controlled version of a proper indicator defined on an open domain, which generalizes the idea of a radially unbounded function on   $\Real^n$.

\begin{defn}
    Let $\{0\}$ be contained in an open set $D\subseteq\Real^n$. A continuous function $\eta:\,D\times \Real^m\ra\Real_{\ge 0}$ is said to be a proper controlled indicator for the origin on $D$ if the following two conditions hold: (1) $\eta(x,u)=0$ if and only if $x = 0$ and $u=0$; (2) $\inf_{u\in\Real^m} \eta(x_m,u) \ra \infty$ for any sequence $\set{x_m}$ in $D$ such that either $x_m\ra p\in \partial D$ or $\norm{x_m}\ra\infty$ as $m\ra\infty$; (3) for any compact set $K\subset D$ and any $x\in K$, if $\|u\|\ra\infty$, then $\eta(x,u)\ra\infty$.   \qed
\end{defn}

Similar to \cite{meng2025towards},  the indicator function that serves as a running cost must be carefully designed so that the resulting value function   characterizes the set $\D$. We impose the following assumptions.

\begin{ass}\label{ass: ass_1}
  Given a proper indicator $\eta:S\times\Real^m\ra\Real$ for $\set{0}$ and $S$, we denote $\omega(x) := \inf_{u\in\Real^m}\eta(x,u)$ and assume that
    \begin{enumerate}
    \item For any $\delta>0$, there exists $\eta_\delta>0$ such that $\omega(x) \geq \eta_\delta$ for all $x\in S$ with $\norm{x}\geq\delta$. 
   
        \item  $\eta(x,u)\lesssim(\alpha_2^{-1}(\|x\|))^b$ for all $(x,u)\in\B(0;r)\times\B(0;k)$ for some $b>0$, where $r$ and $k$ are as in Assumption \ref{ass: local}, and   $\alpha_1,\alpha_2\in\mathcal{K}$ satisfy $\beta(r,t)\leq \alpha_2(\alpha_1(r)e^{-t})$. 
        \item  There exists a continuous function $m:S\times\Real^m\ra\Real_{\ge 0}$ and a compact set $K\subset S\times\Real^m$, such that   $m(x,u)\leq C_K$ for all $(x,u)\in K$ and some $C_K>0$, $\eta(x,u)\geq m(x,u)$ for all $(x,u)\in (S\times \Real^m)\setminus K$, and $\|\phi(T;x,u)-x\|\leq \int_0^T m(\phi(s;x,u),u(s))ds$ for all $x\in S$, $u\in\U$, $T\geq 0$. 
         \item There exists an $a$-neighborhood of $\partial S$ and a   function 
$\psi(x):\Real_+\ra\Real_+$, such that $\psi$ is non-increasing, $\psi(a)\ra \infty$ as $a\ra 0$,  $\omega(x)\gtrsim\psi(\norm{x}_{\partial\Ds})$, and $\int_0^a\psi(x)dx = \infty$. 
        \qed
    \end{enumerate}
\end{ass}

Items 1)–3) of Assumption~\ref{ass: ass_1} are analogous to those imposed in \cite{camilli2008control} [16], except for a different choice of $\eta(x,u)$ and a more general formulation. These conditions are imposed to regulate the growth rate near the origin and at large. Item 4) of   Assumption~\ref{ass: ass_1}  is introduced to play a role similar to that in \cite[Assumption 2-2)]{meng2025towards}. It ensures that the integral, with $\eta$ as the integrand, diverges in a neighborhood of $\partial S$, where the integrand itself becomes unbounded, so that the value   remains consistent with the terminal cost $q$ as trajectories approach the boundary of $S$. 

\subsection{Maximal Converse Lyapunov-Barrier Function for Stability-with-Safety Specification}

This subsection   explicitly construct  value functions, which serve as  maximal CLF on $\D$ and CLBF for $(\mathcal{X}_0, U)$. 

Define $q(x) = q_0(x)$ for $x\in S$ and $q(x) = \infty$ for $x\in\partial S\cup(\Real^n\setminus S)$, 
where $q_0\in\mathcal{C}(S)$ and $q_0(x)\ra\infty$ as $x\ra\partial S$. Consider   a proper indicator $\eta: S\times \Real^m\ra \Real$ on $S$. For $u\in\U$, we further define 
\begin{equation}\label{E: V2}
    V(x) = \inf_{u\in\U} J(x,u),
\end{equation}
where $J(x,u)$ is defined in \eqref{E: V}.

\begin{figure*}[!t]
\normalsize

\begin{equation}\label{E: comp}
        \Gamma(x) = \{u\in\Real^m: \langle\zeta, f(x)+g(x)u\rangle <-w_\eps(x) \;\forall \zeta\in \partial_Pv(x) \text{\;and}\;\langle\xi, f(x)+g(x)u\rangle\leq \alpha(1-h(x)) \;\forall \xi\in \partial_Ph(x)\}
    \end{equation}
    \begin{equation}\label{E: V}
   J(x, u)  =
\begin{cases}
\displaystyle \int_0^{\tau(x,u)} \eta(\phi(t;x,u), u(t))\,dt + q(\phi(\tau(x,u);x,u)), & \tau(x,u) < \infty, \\[1.2ex]
\displaystyle \int_0^{\infty} \eta(\phi(t;x,u), u(t))\,dt, & \tau(x,u) = \infty,
\end{cases}
\end{equation}

\begin{equation}\label{E: dpp}
V(x)  
              =   \inf_{u\in\U}\set{\int_0^{t\wedge\tau(x,u)}\eta(\phi(s;x,u), u(s))ds + V(\phi(t\wedge \tau(x,u); x,u))}. 
    \end{equation}

  \begin{equation}\label{E: zubov_in}
   - \sup_{u\in\Real^m}\{DW(x) (f(x)+g(x)u) + (1-W(x))\varphi(W(x))\eta(x, u)\} = 0  
\end{equation}

\hrulefill
\vspace*{4pt}
\end{figure*}

\begin{lem}\label{lem: dpp}
    For all $x\in\Real^n$ and $t\geq 0$, the \textit{Dynamic Programming Principle}  for $V$ in \eqref{E: dpp} holds. 
\end{lem}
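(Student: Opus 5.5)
We prove the two inequalities ``$\le$'' and ``$\ge$'' in \eqref{E: dpp} separately. Both rest on three facts. The cost $J$ is additive along trajectories. The solutions $\phi$ have the flow property. The admissible class $\U$ is closed under concatenation and time shifts: for $u_1,u_2\in\U$ and $s\ge0$, the control equal to $u_1$ on $[0,s)$ and to $u_2(\cdot-s)$ afterwards lies in $\U$.

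First we treat the trivial cases. If $x\in U$, then $\tau(x,u)=0$ for every $u$. Both sides of \eqref{E: dpp} then reduce to $q(x)=\infty=V(x)$, since $t\wedge\tau=0$ and $V(x)=J(x,u)=q(x)$. Now let $x\in S$ and fix $u\in\U$. Write $s:=t\wedge\tau(x,u)$ and $y:=\phi(s;x,u)$, and let $u_s:=u(\cdot+s)$. The flow property gives $\phi(\cdot;y,u_s)=\phi(\cdot+s;x,u)$, so $\tau(y,u_s)=\tau(x,u)-s$ (with $\infty-s=\infty$). Splitting the integral in \eqref{E: V} at $s$ yields the key identity
\[
J(x,u)=\int_0^{s}\eta(\phi(r;x,u),u(r))\,dr+J(y,u_s),
\]
which we check separately in the cases $\tau<\infty$ and $\tau=\infty$. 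When $s=\tau<\infty$ we have $y\in U$, $\tau(y,u_s)=0$ and $J(y,u_s)=q(y)$, which is consistent with the terminal cost. The identity is valid in $[0,\infty]$ because $\eta\ge0$.

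For ``$\ge$'', the identity gives $J(x,u)\ge\int_0^s\eta\,dr+V(y)$, since $V(y)\le J(y,u_s)$. Taking the infimum over $u$ gives $V(x)\ge$ the right-hand side of \eqref{E: dpp}.

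For ``$\le$'', fix $u$ and let $\eps>0$. If $V(y)=\infty$ there is nothing to prove. Otherwise choose $\tilde u\in\U$ with $J(y,\tilde u)\le V(y)+\eps$, and define $\hat u=u$ on $[0,s)$ and $\hat u=\tilde u(\cdot-s)$ on $[s,\infty)$. Then $\hat u\in\U$ and $\phi(r;x,\hat u)=\phi(r;x,u)$ on $[0,s]$. If $s=t<\tau(x,u)$, the trajectory has not entered $U$ on $[0,t]$, so $\tau(x,\hat u)=t+\tau(y,\tilde u)$. The identity applied to $\hat u$ then gives $V(x)\le J(x,\hat u)\le\int_0^s\eta\,dr+V(y)+\eps$. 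If $s=\tau(x,u)$, the identity for $u$ itself already gives $V(x)\le J(x,u)=\int_0^s\eta\,dr+q(y)$, and $q(y)=V(y)$ since $y\in U$. Letting $\eps\to0$ and taking the infimum over $u$ completes the proof.

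The main obstacle is the bookkeeping of the exit time under concatenation. We must verify that $\tau(x,\hat u)$ equals $s+\tau(y,\tilde u)$. This can only fail if the trajectory touches $U$ before time $s$, which is excluded by the definition of $s$ as $t\wedge\tau(x,u)$. We must also handle the case where the infimum defining $\tau$ is attained at time $s$ itself, and treat the infinite values of $q$ consistently. No regularity of $V$ is needed, so the argument works on all of $\Real^n$ with values in $[0,\infty]$.
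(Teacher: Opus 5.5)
Your proposal is correct and takes essentially the same approach as the paper. Both split $J$ at $t\wedge\tau(x,u)$ via the flow property, get ``$\ge$'' from the shifted control $u(\cdot+s)$, get ``$\le$'' by concatenating $u$ on $[0,t]$ with a (near-)optimal control from $y$, and dispose of the early-exit case using $V=q=\infty$ on $U$. Your explicit check that $\tau(y,u(\cdot+s))=\tau(x,u)-s$ and your separate handling of $\tau(x,u)=t$ are more careful than the paper's write-up, but the argument is the same.
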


\begin{proof}
    For simplicity, we denote $F_t(x,u):=\int_0^{t\wedge\tau(x,u)}\eta(\phi(s;x,u), u(s))ds + V(\phi(t\wedge \tau(x,u); x,u))$ and $y=y(t,x,u) := \phi(t\wedge\tau(x,u);x,u)$. 
    We aim to show   $V(x) = \inf_{u\in\U}F_t(x,u)$ for any $x\in\Real^n$.  For any $u,v\in\U$, denote $u'(s) = u(s)$ for $s\in[0, t]$ and $u'(s) = v(s-t)$ for $s\in(t, \infty)$ 
and 
$u''(s):= u(t+s)$ for all $s\geq 0$. 

Fix any $u\in\U$.  Suppose $\tau(x,u)\geq t$. which implies $\tau(x,u')=t + \tau(y,v)$ and   $\phi(t+s;x,u') = \phi(s;y,v)$ for all $s\geq 0$. Then, $J(x,u') = \int_0^t\eta(\phi(s;x,u),u(s))ds + \int_0^{\tau(y,v)}\eta(\phi(s;y,v),v(s))ds + q(\phi(\tau(y,v);y,v))$ when $\tau(y,v)<\infty$, and $J(x,u') = \int_0^t\eta(\phi(s;x,u),u(s))ds + \int_0^{\infty}\eta(\phi(s;y,v),v(s))ds$ when $\tau(y,v)=\infty$. In either case, we have $ J(x,u') = \int_0^t\eta(\phi(s;x,u),u(s))ds  + J(y,v)$.  
Therefore, 
    \begin{equation}
    \begin{split}
    V(x)  = &\inf_{w\in\U}J(x,w)\\
    \leq &\inf_{v\in\U}J(x,u')\\
     = & \int_0^tl(\phi(s;x,u),u(s))ds  + V(y) = F_t(x,u). 
    \end{split}
    \end{equation}
Since $u\in\U$ is arbitrary, we have $V(x)\leq \inf_{u\in\U}F_t(x,u)$. For $\tau(x,u)<t$, it reduces to   $F_t(x,u) = J(x,u)$, in which case we also have $V(x)\leq \inf_{u\in\U}F_t(x,u)$. 

Now we show the inverse side.  Suppose $\tau(x,u)\geq t$. Then, $\phi(t+s;x,u) = \phi(s;y,u'')$ for all $s\geq 0$ and $\tau(y,u'') + t = \tau(x,u)$. Similar to above, in either case where $\tau(x,u)<\infty$ or $\tau(x,u)=\infty$, we have $J(x,u) = \int_0^t \eta(\phi(s;x,u),u(s))ds + J(y,u'')$. It follows that $J(x,u)\geq \int_0^t \eta(\phi(s;x,u),u(s))ds + V(y) = F_t(x,u)$. Then, for any $\eps>0$, there exists some $u_\eps\in\U$ and $V(x)+\eps\geq J(x,u_\eps)\geq F_t(x,u_\eps)\geq \inf_{u\in\U}F_t(x,u)$. Sending $\eps\downarrow 0$, we have $V(x)\geq \inf_{u\in\U}F_t(x,u)$.  For $\tau(x,u)<t$, it reduces to   $F_t(x,u_\eps) = J(x,u_\eps)$. Following the same argument, we still have $V(x)\geq \inf_{u\in\U}F_t(x,u)$. 
\end{proof}

The following theorem establishes key properties of 
 $V$. 

\begin{thm}\label{thm: V}
  Under Assumption \ref{ass: local}-\ref{ass: ass_1}, the function $V:\Real^n\ra\Real\cup\set{+\infty}$ defined by \eqref{E: V2} satisfies the following properties:
    \begin{enumerate}
        \item $V(x)<\infty$ if and only if $x\in\D$;
        \item $V(x)\ra\infty$ as $x\ra\partial\D$ or $\|x\|\ra \infty$;
        \item $V$ is positive definite on $\D$;
        \item $V$ is continuous on $\D$. 
    \end{enumerate}
\end{thm}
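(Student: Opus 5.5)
We prove the four items in the order 1), 3), 2), 4), following the scheme of \cite{camilli2008control} and \cite{meng2025towards} and using Lemma~\ref{lem: dpp} throughout.

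\textbf{Item 1).} Take $x\in\D$ with a safe, stabilizing control $u$, so $\tau(x,u)=\infty$.
\begin{itemize}
\item Choose $T$ with $\phi(T;x,u)\in\B(0;r)$ and concatenate $u|_{[0,T]}$ with the bounded control of Assumption~\ref{ass: local}.
\item On $[0,T]$ the trajectory stays in a compact subset of $S$, since its distance to $U$ is bounded below as in the openness proof. The control is essentially bounded there, so the integral of $\eta$ over $[0,T]$ is finite.
\item On $[T,\infty)$, item 2) of Assumption~\ref{ass: ass_1} gives $\eta\lesssim(\alpha_2^{-1}(\|\phi\|))^b\le(\alpha_1(\|\phi(T)\|)e^{-(t-T)})^b$. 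This is integrable, so $V(x)<\infty$.
\end{itemize}

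Conversely, suppose $V(x)<\infty$ and pick $u$ with $J(x,u)<\infty$. Then $\tau(x,u)=\infty$, because otherwise the terminal cost is $q=\infty$. So the trajectory stays in $S$ and $\int_0^\infty\eta\,dt<\infty$. I then argue that $\phi(t)\to0$.
\begin{itemize}
\item By item 1), $\omega\ge\eta_\delta$ off $\B(0;\delta)$, so the time spent outside $\B(0;\delta)$ is finite. Hence the trajectory enters $\B(0;\delta)$ at arbitrarily large times.
\item To exclude repeated excursions, use item 3). Each excursion from $\B(0;\delta)$ to $\partial\B(0;2\delta)$ has length at least $\delta$, which is bounded by $\int m$ over the excursion.
\item On $K$, $m\le C_K$, so such an excursion either takes time at least $\delta/C_K$ outside $\B(0;\delta)$ or incurs cost at least comparable to $\delta$ outside $K$. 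Either way it costs a fixed positive amount of $\int\eta$, so only finitely many excursions occur.
\end{itemize}
Letting $\delta\downarrow0$ gives $x\in\D$.

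\textbf{Item 3).} $V(0)=0$, using $u\equiv0$ and $\eta(0,0)=0$. For $x\neq0$, the same excursion estimate shows that leaving $\B(x;\|x\|/2)$ costs a positive amount, so $V(x)>0$.

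\textbf{Item 2).} The case $\|x\|\to\infty$ follows from the properness of $\omega$ together with the excursion estimate: reaching the origin from far away costs unboundedly. For $x_m\to y\in\partial\D$, Corollary~\ref{cor: boundary} gives two cases.
\begin{itemize}
\item If $y\in\partial S\cap\D_0$, the cost near $x_m$ is at least of order $\int\psi(\|\phi\|_{\partial\D_s})\,dt$. Item 4) of Assumption~\ref{ass: ass_1} and the divergence $\int_0^a\psi=\infty$ make this blow up. The comparison uses the Lipschitz bound $\|\dot\phi\|\le$ const on bounded sets near the boundary, which converts time into distance.
\item If $y\in\partial\D_0$, argue by contradiction: assume $V(x_m)\le M$ along a subsequence. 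The uniform estimates then give a uniform time $T$ to reach $\B(0;r/2)$ while staying uniformly away from $U$, and controls of cost at most $M$ are essentially bounded via item 3) of the definition of proper indicator. Continuous dependence on initial data then puts $y\in\D$, as in the openness proof, which is a contradiction.
\end{itemize}

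\textbf{Item 4).} I expect this to be the main obstacle. The plan is:
\begin{itemize}
\item \emph{Upper semicontinuity.} Use near-optimal controls, truncated once the state lies in $\B(0;r)$, together with continuous dependence as in the openness proof. The tail is handled uniformly by the bound from Assumption~\ref{ass: local}.
\item \emph{Lower semicontinuity.} This is harder because minimizing controls may be unbounded. Use Lemma~\ref{lem: dpp}: for $V(z)\le V(x)+1$, the $\epsilon$-optimal controls from $z$ have uniformly bounded cost. Properness in $u$ (item (3) of the definition) yields uniform control of the time spent with large $|u|$ and of the trajectory modulus via item 3) of Assumption~\ref{ass: ass_1}. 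Then compare trajectories from $z$ and $x$ on a finite horizon $T$ under the same control, and bound the tail after reaching $\B(0;r)$ by item 2).
\end{itemize}
The delicate point is that $L^\infty$ controls with only integral cost bounds give no uniform Gronwall constant. I would first try restricting to a compact subset of $\D$, where $\eta$ controls $|u|$ superlinearly enough to make $\int\|g\|\,|u|$ small on short intervals.
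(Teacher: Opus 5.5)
Your items 1) and 3) follow the paper's route. For 1) this is concatenation with the local control plus the tail bound from item 2) of Assumption~\ref{ass: ass_1}; for 3) it is the cost-versus-displacement estimate from items 1) and 3). Your excursion count for the converse of 1) is more careful than the paper's dichotomy ``$\tau(x,u)<\infty$ or $\tau_r(x,u)=\infty$'', which misses safe trajectories that enter $\B(0;r)$ and then leave.

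The genuine gap is item 2) at $y\in\partial S\cap\D_0$. Converting time into distance needs $\norm{\dot\phi}\le C$, but $\dot\phi=f(\phi)+g(\phi)u_m$ and the near-optimal controls $u_m$ are not uniformly bounded. The only speed information available is item 3) of Assumption~\ref{ass: ass_1}, which charges cost of order one per unit displacement, not of order $\psi$. So the layer where $\omega$ is large can be crossed quickly at bounded cost. Take $\dot x=u$, $S=(-1,1)$, $\eta(x,u)=\frac{x^2}{1-|x|}+u^2$ and $\psi(s)=1/s$. This example satisfies Assumptions~\ref{ass: local}--\ref{ass: comp} and items 1), 3), 4) of Assumption~\ref{ass: ass_1}, and item 2) along the local control $u=-x$. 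From $x_0=1-\eps$, the control $u=-x/\sqrt{1-x}$ keeps the state in $(0,1-\eps]$ and drives it to $0$, at cost $\int_0^{1-\eps}\frac{2x}{\sqrt{1-x}}\,dx\le\frac{8}{3}$. Hence $V$ stays bounded as $x\ra\partial\D$.

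The paper's estimate \eqref{E: eps_m} makes the same move: it bounds the displacement by $\norm{f}t$ and drops $g(x)u$. Following the paper therefore does not close this step. What is missing is a coupling between the boundary penalty and the control penalty. For example, if $\eta(x,u)\ge\omega(x)+c\norm{u}^2$ and $f,g$ are bounded near $\partial S$, then $\eta\gtrsim\sqrt{\omega}\,\norm{f+gu}$ wherever $\omega$ is large. Your argument then goes through with $\int_0^a\sqrt{\psi(s)}\,ds=\infty$ in place of $\int_0^a\psi=\infty$.

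The second gap is lower semicontinuity in item 4), which you leave open; the assumptions do not give the superlinearity you hope for. Item (3) of the definition of a proper controlled indicator is only coercivity. A cost bound therefore controls the measure of $\set{t:\norm{u_m(t)}\ge R}$, but neither $\norm{u_m}_\infty$ nor, in general, $\int\norm{u_m}\,dt$, which is what a Gronwall estimate for $f+gu$ needs. The same objection, which you raise yourself in item 4), defeats your claim in the $\partial\D_0$ case of item 2) that controls of cost at most $M$ are essentially bounded. That case is redundant anyway: since $\D\subseteq S\subseteq\D_0$ and $\D_0$ is open, $\partial\D\cap\partial\D_0\subseteq\partial S$.

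The paper gives no argument for item 4), only a pointer to \cite{camilli2008control}. To finish, add a structural hypothesis such as convexity and superlinear growth of $\eta(x,\cdot)$, which holds for the quadratic cost in the paper's last remark. Then:
\begin{itemize}
\item bounded cost makes near-optimal controls equi-integrable, hence weakly relatively compact in $L^1_{\mathrm{loc}}$;
\item the control-affine structure turns weak convergence of controls into locally uniform convergence of trajectories;
\item convexity in $u$ gives weak lower semicontinuity of the running cost.
\end{itemize}
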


\begin{proof}
    (1) Pick a point $x\in\D$. Then there exists a $u\in\U$ and a $t\in(0, \infty)$ such that $\tau(x,u) = \infty$ and $\phi(t;x,u)\in\B(0;r)$ as stated in Assumption \ref{ass: local}. By Lemma \ref{lem: dpp}, $V(x)\leq \int_0^t \eta(\phi(s;x,u),u(s))ds + V(\phi(t;x,u))$. The finiteness of both terms follows from the argument in  \cite[Proposition 3.3]{camilli2008control}  under Assumption \ref{ass: ass_1}-2). To show the converse, we suppose $x\notin \D$ and denote $\tau_r(x,u) = \inf\set{t\geq 0: \phi(t;x,u)\in\B(0;r)}$, where the $\B(0;r)$ is as stated in Assumption~\ref{ass: local}. Then, for all $u\in\U$, either $\tau(x,u)<\infty$ or $\tau_r(x,u) = \infty$. Both cases imply $J(x,u)=\infty$, the former by definition  and the latter by the same argument as in \cite[Proposition 3.3]{camilli2008control}. Therefore, 
$V(x)=\infty$, which leads to a contradiction.

(2) By 
Corollary~\ref{prop: boundary}, for any $y\in\partial\D$, we have either $y\in\partial\D_0$ or $y\in\partial S\cap\D_0$. Consider $\{x_m\}\subset\D$. The cases $x_m \to \infty$ and $x_m \to y \in \partial \D_0$ follow the same proof as in \cite[Proposition 3.6]{camilli2008control}. It suffices to
show the case where $x_m\ra y\in\partial S \cap \D_0\subseteq\partial\D$. 


Recall that $\omega(x) := \inf_{u\in\Real^m}\eta(x,u)$, as defined in Assumption~\ref{ass: ass_1}. By Lemma \ref{lem: dpp}, for sufficiently large $m$, pick any $\eps>0$, there exists $u_m$ such that, for any $T>0$, 
    \begin{equation*}
\begin{split}
       &  V(x_m) +\eps \\
       \geq & \int_0^{T}\eta(\phi(s;x_m, u_m), u_m(s))ds + V(\phi(T;x_m, u_m))\\
       \geq & \int_0^{T}\eta(\phi(s;x_m, u_m), u_m(s))ds \\
       \geq &  \int_0^{T}\omega(\phi(s;x_m, u_m))ds. 
\end{split}
\end{equation*}
Let $a$ be given as in (4) of Assumption \ref{ass: ass_1}.  For any $t\in (0, t_\delta)$ for some sufficiently small $t_\delta>0$, we have
    \begin{equation}\label{E: eps_m}
    \begin{split}
      &  \|\phi(t;x_m, u_m)\|_{\partial S} \\ \leq &\norm{\phi(t;x_m, u_m)-\phi(\tau(y, u_m);x_m, u_m)} \\
      & + \norm{\phi(\tau(y, u_m);x_m, u_m)-\phi(\tau(y, u_m);y, u_m)}\\
      \leq & \norm{f}t +\eps_m   
      \leq   \norm{f} t_\delta +\eps_m \leq a, 
    \end{split}
\end{equation}
 where $\eps_m:= \|x_m-y\|\ra 0$, and the second inequality follows directly by the locally Lipschitz continuity of $f$ and that $\sup_{u\in\U}\tau(y, u) = 0$.  It follows that 
\begin{equation*}
    \begin{split}
        V(x_m)  \geq &\int_{0}^{\delta_t}\omega(\phi(s;x_m))ds \\\gtrsim  & \int_{0}^{\delta_t}\psi(\|f\|s)+\eps_m)ds 
        \gtrsim  \int_{\eps_m}^{\eps_m+\norm{f}\delta_t}\psi(x)dx
    \end{split}
\end{equation*}
and $\liminf_{m\ra\infty}V(x_m)\gtrsim \int_{0}^{\norm{f}\delta_t}\psi(x)dx$. Hence, $V(x_m)\ra\infty$ by  Assumption \ref{ass: ass_1}-4).  

(3)  It is clear that $V(0)=0$. To show the converse, we assume $x\neq 0$ but $V(x)=0$. Then, there exists $\{u_k\}\subset\U$ such that $J(x,u_k)\ra 0$ as $k\ra\infty$. Let $\delta=\|x\|/2$, and define $t_k:=\inf\{t\geq 0: \|\phi(t;x,u_k)\|\leq \delta\}$. By (1) of Assumption \ref{ass: ass_1}, $J(x, u_k)\geq\int_0^{t_k} \eta(\phi(s;x,u_k), u_k(s))ds\geq \eta_c t_k$, which implies $t_k\ra 0$. Now let $K\subset S\times\Real^m$ be as in (3) of Assumption \ref{ass: ass_1},  consider $E_k:=\{t\in[0, t_k]: \phi(t;x,u_k), u_k(t))\in K$. It follows that 
    \begin{equation*}
    \begin{split}
        J(x,u_k) &\geq \int_{[0, t_k]\setminus E_k}\eta(\phi(s;x,u_k), u_k(s))ds\\
       & \geq  \int_{[0, t_k]\setminus E_k}m (\phi(s;x,u_k), u_k(s))ds\\
       & \geq \|\phi(t_k;x,u_k)-x\|-\int_{E_k}m(\phi(s;x,u_k), u_k(s))ds\\
       & \geq \|x\|-\|\phi(t_k;x,u_k)\|-C_Kt_k\\
    & \geq \delta - C_Kt_k. 
    \end{split}
\end{equation*}
 Therefore, $\liminf_{k\ra\infty} J(x,u_k)\geq \delta>0$, which contradicts $J(x,u_k)\ra 0$. The above shows $V(x)>0$ for $x\neq 0$. The proof is complete. 

(4) The proof follows the same procedure as in \cite{camilli2008control}. We omit it due to repetition.
\end{proof}

    Note that, for any $x_m\ra \partial \D$ such that $V(x_m)\leq C$ for all $m\in\mathbb{N}$, one has $J(x_m, u_m)\leq V(x_k)+\eps\leq C+\eps$. However, by boundedness of $\set{\phi(t;x_m,u_m), t\geq 0, m\in\mathbb{N}}$ and the uniform boundedness of $\int_0^t\|u_m(t)\|dt$ in $m$, one has $\|\phi(t;x_m,u_m)-\phi(t;y,u_m)\|<\delta$ for any $\delta, t>0$ and sufficiently large $m$, which eventually implies that $y\in\D_0$.   This means that the only   case in which $V(x_m)$ does not diverge is when $x_m\ra y\in\D_0$.  This is why condition (4) in Assumption~\ref{ass: ass_1} is necessarily required, to impose an additional rate of divergence near $\partial S$.

\begin{ex}
    We show a counterexample where the integrand diverges near the boundary but $V(x_m)\not\to \infty$ for $x_m\to y\in\partial\D$. Consider $\dot{x} = -x+u$. Let $u'\equiv 0$. Following the same argument as in \cite[Example 2]{meng2025towards}, onn can show that $J(x_m,u')$ is uniformly bounded in $m$. 
  Consequently,  $V(x_m) \leq  J(x_m,u')$ is uniformly bounded.  \qed
\end{ex}

\begin{rem}\label{rem: sup_tau}
    To obtain the second inequality from the first inequality in \eqref{E: eps_m}, it only needs to assume that $\sup_{m} \tau(y,u_m)<\infty$. However, the compatibility assumption directly implies $\sup_{m} \tau(y,u_m)= 0$, which is somewhat stronger than necessary, yet a weaker version of the assumption is nontrivial in the literature to the best of the authors' knowledge. \qed 
\end{rem}

Based on Theorem 1, and by similar reasoning as in \cite[Section. 3]{liu2025physics}, 
one can also show that $V$ in \eqref{E: V} is the unique  viscosity solution to the HJB
\begin{equation}\label{E: lyap}
   - \sup_{u\in\Real^m}\{DV(x) (f(x)+g(x)u)  -\eta(x,u)\} = 0
\end{equation}
with $V(0) = 0$. 
As a direct consequence, such 
$V$ is also a viscosity supersolution on 
$\D$ satisfying \eqref{E: clf}. We omit the proof due to similarity. 

Considering  $\beta: [0,\infty)\to\Real$ satisfying $$\dot{\beta} = (1 - \beta)\varphi(\beta),\;\beta(0)=0,$$ where $\varphi(s)>0$ for all $s\ge 0$, there exists a nonempty interval $I$ such that the function $G: I\to\Real$, defined by $s \mapsto (1 - s)\varphi(s)$, is monotonically decreasing on $I$. It can be verified that $\beta$ is strictly increasing and that $\beta(s)\ra 1$ as $s\ra\infty$. We further define 
\begin{equation*}
    W(x)=\begin{cases}
    \beta(V(x)),\;\;x\in\D\\
    1,\;\;\text{otherwise}.
\end{cases}
\end{equation*}

Then, by \cite[Chapter II, Proposition 2.5]{bardi1997optimal}, $W$  is the unique viscosity solution to  the Zubov equation \eqref{E: zubov_in}
in $\D$, with $W(0) = 0$. By the construction of $\beta$, $W(x)\ra 1$ as $x\ra\partial\D$. On the other hand, $W(x) = 1$ for all $x\in\Ds\setminus\D$ is a trivial solution to \eqref{E: zubov_in}. 
Following a similar procedure as in     \cite[Section 4]{camilli2008control}, $W$ is the unique viscosity solution to \eqref{E: zubov_in}. Additionally, by virtue of Remark \ref{rem: equiv}, such $W$ readily qualifies as a CLBF as defined in Section \ref{sec: Prob} via proximal gradients. Such $V$ and $W$, once obtained, can be used to design (not necessarily continuous) feedback controls using the same procedure as for CLFs \cite{sontag1989universal, sontag1999stability}.

\begin{rem}
    One can consider a special form of $\eta(x,u)=\theta\frac{x^{\mathsf{T}}Qx+u^{\mathsf{T}}Ru}{(1-h(x))^k}$  for  $\theta, k>0$ and positive definitematrices $Q$ and $R$, to ensure $\eta$ a valid proper indicator. In this case, one can let $\gamma(x) = \frac{(1-h(x))^k}{\theta}$, and set $\tilde{f}(x) = \gamma(x)f(x)$and $ \tilde{g}(x) = \gamma(x)g(x)$. One can verify that $(\tilde{f}, \tilde{g})$ and $(f, g)$ are topologically equivalent on $\D$ \cite{meng2025towards, camilli2008control}.  Then $V$  also solves for the corresponding HJB $$- \sup_{u\in\Real^m}\{DV(x) (\tilde{f}(x)+\tilde{g}(x)u)  -x^{\mathsf{T}}Qx-u^{\mathsf{T}}Ru\} = 0$$ and $W$ can be defined correspondingly (with the same regular cost $x^{\mathsf{T}}Qx-u^{\mathsf{T}}Ru\}$) as the solution of the Zubov equation associated with the system $(\tilde{f}, \tilde{g})$. Multiple-obstacle settings can also be handled using a similar idea as in \cite[Section IV]{meng2025towards}.\qed
\end{rem}

\subsection{Discussion}
In \cite{quartz2025converse}, the authors provide a succinct formula for  $$\hat{V}(x) = \frac{v(x)}{v(\phi(T(x);x))},$$ regardless of differentiability, constructed from a CLF $v:\D_0\ra\Real$ and the corresponding closed-loop control. Here, $T(x)$ denotes the first hitting time of the boundary of the safe set for the closed-loop system. One can show that such a $\hat{V}$ is equivalent to the construction of $V$ in this paper: both approaches quantify $\D$ by tuning the level sets of an existing CLF on $\D_0$, although in different ways. However, the given formula satisfies $$DV(x)(f(x)+g(x)u) = -\omega_1(x)$$ for some positive definite continuous function $\omega_1$ with $D\omega_1(x)\ra 0$,  but also requires a second PDE to characterize $T(x)$, which is impractical. In contrast, this work directly modifies the cost function in the associated optimal control problem by penalizing trajectories and controls that leave the safe set, leading to a single PDE whose solution serves as a CLBF, but at the cost of lacking a general explicit representation of the solution.


\section{Conclusion}

In this work, we developed a Zubov–HJB PDE framework for constructing control Lyapunov–barrier functions on the domain of null-controllability with safety constraints. The proposed formulation yields a maximal CLBF characterized as the viscosity solution of a single PDE, enabling feedback synthesis with stability and safety guarantees without requiring differentiability. The construction relies on a compatibility assumption that ensures well-posedness and desirable boundary behavior.  While this assumption is standard and no constructive counterexample is known, its necessity remains open. Future work will focus on weakening this condition by further investigating the interaction between system flows and the boundary of the safe set, as well as developing efficient numerical methods for solving the proposed PDE.

\bibliographystyle{plain}
\bibliography{ecc26}

\appendix
\section{A}
\begin{lem}\label{lem: safe_control}
Let $\mathcal{N}_\eps:= \{x\in S: 1-\eps\leq h(x)<1\}$. If for each $x\in \mathcal{N}_\eps$ there exists a $u\in\Real^m$ such that $\langle \xi, f(x)+g(x)u\rangle\leq \alpha(1-h(x))$ for all $\xi\in\partial_Ph(x)$, then $S$ is controlled invariant. 
\end{lem}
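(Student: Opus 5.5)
The plan is to reduce controlled invariance of the open set $S=\set{h<1}$ to a weak invariance (viability) statement for a closed set, and then apply the nonsmooth viability theorem in its proximal normal form \cite{clarke1998nonsmooth}. Fix $c\in(1-\eps,1)$ and consider the sublevel set $S_c:=\set{x: h(x)\le c}$. Because $h$ is continuous, $S_c$ is closed, and its boundary points satisfy $h=c$, so they lie in $\mathcal{N}_\eps$. The goal is to show that from every $x_0\in S$ there is $u\in\U$ with $\phi(t;x_0,u)\in S$ for all $t\ge0$. To do this, we first show that $S_c$ is weakly invariant for every $c\in[\max\{h(x_0),1-\eps\},1)$. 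Then $\tau(x_0,u)=\infty$ for the control produced by any such $S_c$.

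The key step is to verify the proximal-normal (Hamiltonian) condition for the set-valued map $F(x):=\set{f(x)+g(x)u:\,u\in\Real^m}$ on the closed set $E:=\set{x: h(x)\le h(x_0)\vee(1-\eps)}$. Since $\alpha(1-h(x))$ is only a nonnegative bound and not a nonpositive one, the set $E$ alone is not enough. We instead work with the time-dependent constraint set $\set{(t,x): h(x)\le \rho(t)}$, where $\rho$ solves $\dot\rho=\alpha(1-\rho)$ with $\rho(0)=h(x_0)\vee(1-\eps)$. Because $\alpha\in\mathcal{K}_\infty$ is locally Lipschitz, $1$ is an equilibrium of this ODE, so $\rho(t)<1$ for all $t$ by uniqueness. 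Every proximal normal to the sublevel set at a point $(t,x)$ with $h(x)=\rho(t)\in[1-\eps,1)$ can be expressed, via the standard relation between $N^P$ of a sublevel set and $\partial_P h$ for locally Lipschitz $h$ (limiting form if needed), as a nonnegative multiple of $(-\dot\rho,\xi)$ with $\xi\in\partial_P h(x)$. The hypothesis supplies $u$ such that $\langle\xi,f+gu\rangle-\alpha(1-h(x))\le0$, which is exactly the required inequality $\min_{v\in F}\langle\zeta,v\rangle\le0$. If the hypothesis is taken to give one $u$ for all $\xi\in\partial_Ph(x)$ at once, the condition holds uniformly over normals. On the interior, where $h<\rho$, there are no nonzero normals, so nothing needs checking.

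The main obstacle is that $F(x)$ is unbounded, and the control value may grow near the boundary, while the viability theorem requires a locally bounded, convex, compact-valued, usc multifunction with linear growth. I would handle this by truncation. For $x$ in a compact subset, choose the control from a ball $\B(0;R)$, where $R$ is obtained by a compactness and lower semicontinuity argument on the compact set $\set{x\in\mathcal{N}_\eps:\norm{x}\le M}$. If a uniform $R$ cannot be extracted, I would work on $\set{\norm{x}\le M}$, apply viability up to the exit time from this ball, and patch solutions together using the global existence assumption on $[0,\infty)$. The truncated map $F_R(x)=f(x)+g(x)\B(0;R)$ is convex and compact-valued, and it is locally Lipschitz because $f$ and $g$ are. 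The viability theorem then gives an absolutely continuous trajectory with $\dot x\in F_R(x)$ that stays in the constraint set. A measurable selection (Filippov's lemma) turns this into a control $u\in\U$ that realizes the trajectory. Along the trajectory we get $h(\phi(t))\le\rho(t)<1$, so $\phi(t)\in S$ for all $t$, and therefore $S$ is controlled invariant.
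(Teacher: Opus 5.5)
Your route is the paper's. You compare $h$ along a controlled trajectory with the solution of $\dot z=\alpha(1-z)$, which stays below $1$ because $1$ is an equilibrium, and obtain $h(\phi(t))\le z(t)$ from weak invariance via the proximal-normal test of \cite[Ch.~4, Thm.~2.10]{clarke1998nonsmooth}. The paper adds $z$ as a state and works with the closed set $\mathrm{epi}(h)$. There $N^P_{\mathrm{epi}(h)}(x,h(x))=\set{\lambda(\xi,-1):\lambda\ge0,\ \xi\in\partial_Ph(x)}$ holds exactly, because horizontal proximal normals vanish when $h$ is Lipschitz. Your set $\set{(t,x):h(x)\le\rho(t)}$ is the same set after the change of variables $z=\rho(t)$, which is a $C^{1,1}$ diffeomorphism since $\dot\rho=\alpha(1-\rho)>0$. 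That is the actual reason its normals are $\lambda(-\dot\rho,\xi)$ with $\xi\in\partial_Ph(x)$; for a general Lipschitz sublevel set the proximal normals need not come from $\partial_P$ at all. So drop ``limiting form if needed'': limiting subgradients are not covered by the hypothesis. Also delete the opening claim that $S_c$ is weakly invariant. It is false in general: take $n=1$, $g=0$, $f\equiv1$, $h(x)=1-e^{-x}$, $\alpha(s)=s$. Your choice $\rho(0)=h(x_0)\vee(1-\eps)$ keeps every contact point in $\mathcal{N}_\eps$ and covers $x_0\notin\mathcal{N}_\eps$, which makes explicit what the paper leaves implicit.

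The genuine gap is the truncation step. The hypothesis is purely pointwise, so the least admissible control norm $\mu(x)$ need not be bounded on compact sets. ``Compactness and lower semicontinuity'' cannot produce $R$, since lower semicontinuity bounds $\mu$ from below, not above. Concretely, let $h(x)=x_1$, $\alpha(s)=s$, $f(x)=(1-x_1+|x_2|,\,1)$, $g(x)=(x_2^2,\,0)^{\top}$. The hypothesis holds everywhere: $u$ is arbitrary if $x_2=0$, and $u\le-1/|x_2|$ otherwise. Here $\mu(x)=1/|x_2|$, with $\mu=0$ on $\set{x_2=0}$, is even lower semicontinuous. Yet at a contact point $x_1=\rho(t)$ with $0<x_2<1/R$, every $|u|\le R$ gives $\frac{d}{dt}(x_1-\rho)\ge x_2-Rx_2^2>0$. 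So for every $R$ the normal condition for $F_R$ fails, and $\set{x_1\le\rho(t)}$ is not weakly invariant under $F_R$. Restricting to $\set{\norm{x}\le M}$ does not help, because the blow-up happens at bounded states; and $\set{x\in\mathcal{N}_\eps:\norm{x}\le M}$ is not compact anyway, since $h<1$ is strict. The paper's proof does not close this either: it applies the viability theorem with $u\in\Real^m$, whose velocity sets are not compact. You need an extra ingredient. Either assume that admissible controls can be chosen uniformly bounded on compact subsets of $\set{1-\eps\le h\le c}$ with $c<1$, or derive this. For $h\in C^2$ you can derive it by replacing $\alpha$ with $\alpha+\delta$, where $\delta\in\mathcal{K}_\infty$ is locally Lipschitz. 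The hypothesis still implies the new inequality, and the comparison solution still stays below $1$. Wherever $\nabla h^{\top}g=0$, the inequality becomes strict, so $u=0$ works in a neighborhood; elsewhere an explicit locally bounded choice exists. The resulting bound depends on $c$, so patching over horizons yields only locally bounded controls, not necessarily elements of $L^\infty([0,\infty))$.
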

\begin{proof}
 For simply, we denote $x(t)$ of some controlled path under some $u\in\U$.    Fix $x_0\in\mathcal{N}_\eps$, and let $z(t)$ solve $\dot z =\alpha(1-z)$ with $z_0:=z(0)= h(x_0)$. It is clear that $z(t)<1$ for all $t\geq 0$. 
    Now consider the closed set $\text{epi}(h) = \{(x,z): h(x)\leq z\}$, which is   the epigraph of $h$. If we can show that $\text{epi}(h)$ is controlled invariant, then there exists a $u\in \U$ such that $h(x(t))\leq z(t)$ for all $t\geq $ for any $(x_0, z_0)\in \text{epi}(h)$, and $h(x(t))\leq z(t)<1$ for all $t\geq 0$. 

By definition, the proximal normal cone of $\text{epi}(h)$ is such that $$N_{\text{epi}(h)}^P(x,z)= \{\lambda(\xi, -1): \lambda\geq 0, \xi\in\partial_P h(x)\}.$$  Take any $\xi\in\partial_Ph(x_0)$. By assumption, there exists a $u\in\U$ such that $\langle \xi, f(x)+g(x)u\rangle\leq \alpha(1-h(x_0))$, which  implies $$\inf_u\langle (\xi, -1), (f(x)+g(x)u, \alpha(1-z_0))\rangle\leq 0.$$ Consequently,   $$\inf_u\langle \vartheta, (f(x)+g(x)u, \alpha(1-z_0))\rangle\leq 0,\;\vartheta\in N_{\text{epi}(h)}^P(x,z).$$ Then, by \cite[Chapter 4, Theorem 2.10]{clarke1998nonsmooth}, $\text{epi}(h)$ is controlled invariant. The proof is complete.
\end{proof}

\begin{lem}
    Under Assumption \ref{ass: comp},     $S$ is controlled invariant and asymptotic controllable. 
\end{lem}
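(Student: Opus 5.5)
The statement combines two claims, controlled invariance of $S$ and asymptotic controllability. I will obtain the first from Lemma \ref{lem: safe_control} and the second by concatenating controls, with most of the work in making the safe control also drive the state toward the origin.

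\emph{Controlled invariance.} By Assumption \ref{ass: comp}, for every $x\in\mathcal{N}_\eps$ the set $\Gamma(x)$ is nonempty. Any $u\in\Gamma(x)$ satisfies in particular $\langle\xi, f(x)+g(x)u\rangle\leq\alpha(1-h(x))$ for all $\xi\in\partial_Ph(x)$. This is exactly the hypothesis of Lemma \ref{lem: safe_control}, so $S$ is controlled invariant, i.e.\ $\Ds=S$.

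\emph{Asymptotic controllability inside $S$.} The goal is to show that for every $x\in S$ some $u\in\U$ both keeps $\phi(t;x,u)$ in $S$ and drives it to the origin. I will work with the set-valued map
\[
\Gamma'(x)=\Big\{u:\ \langle\zeta,f(x)+g(x)u\rangle\le -w(x)\ \forall\zeta\in\partial_Pv(x)\Big\}\cap\Big\{u:\ \langle\xi,f(x)+g(x)u\rangle\le\alpha(1-h(x))\ \forall\xi\in\partial_Ph(x)\Big\},
\]
where $w$ is the CLF decay rate from \eqref{E: clf}. On $S\setminus\mathcal{N}_\eps$ the barrier constraint is not needed, since $h<1-\eps$ there; on $\mathcal{N}_\eps$ both constraints are met by Assumption \ref{ass: comp}, using $w_\eps$ in place of $w$. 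So $\Gamma'(x)$, built with $w$ off $\mathcal{N}_\eps$ and $w_\eps$ on it, is nonempty on all of $S\setminus\{0\}$.

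I then apply the same invariance theorem \cite[Chapter 4, Theorem 2.10]{clarke1998nonsmooth} to the closed set
\[
E=\{(x,z_1,z_2):\ v(x)\le z_1,\ h(x)\le z_2\}
\]
under the augmented dynamics $\dot z_1=-\tilde w(z_1)$, $\dot z_2=\alpha(1-z_2)$. Here $\tilde w$ is a positive definite lower comparison function with $\tilde w(v(x))\le w(x)$ on sublevel sets of $v$. The proximal normals of $E$ at points where both constraints are active are nonnegative combinations $\lambda_1(\zeta,-1,0)+\lambda_2(\xi,0,-1)$. One control must make the inner product with every such combination nonpositive, and a single $u\in\Gamma'(x)$ does this. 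This yields a trajectory with $h(\phi(t))\le z_2(t)<1$ and $v(\phi(t))\le z_1(t)\to0$. Positive definiteness and properness of $v$ on $\D_0$ then give $\phi(t)\to0$, while the trajectory stays in $S\subseteq\D_0$. Finally, once the trajectory enters $\B(0;r)\subseteq S$, I can switch to the control of Assumption \ref{ass: local} for the uniform $\mathcal{KL}$ bound, which establishes asymptotic controllability. It also shows $S\cap\D_0=\Ds\cap\D_0\subseteq\D$, as used in Corollary \ref{cor: boundary}.

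\emph{Main obstacle.} The delicate step is the joint invariance argument. The proximal normal cone of an intersection of two epigraph-type sets need not equal the sum of the individual cones at nonsmooth points, and the strict inequality in $\Gamma$ only holds on $\mathcal{N}_\eps$. My first attempt will be the lower-semicontinuous-in-time (Dini) formulation of Remark \ref{rem: equiv}. If that stalls, I will build an $\eps$-sample-and-hold (Clarke–Ledyaev–Sontag–Subbotin) feedback selecting $u\in\Gamma'$ at sampling points: the strict decrease margin $w_\eps>0$ absorbs the sampling error for $v$, while the barrier condition holds with margin away from $\partial S$ because $\alpha(1-h)>0$ on $S$. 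Taking the sampling step to zero on compact sets then gives the safe stabilizing trajectory.
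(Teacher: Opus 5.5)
Your controlled-invariance step is the same as the paper's. The asymptotic-controllability part, however, has a genuine gap, exactly at the step you flagged, and what you wrote does not close it. You assert that every proximal normal to $E$ at a point with $v(x)=z_1$, $h(x)=z_2$ has the form $\lambda_1(\zeta,-1,0)+\lambda_2(\xi,0,-1)$ with $\zeta\in\partial_P v(x)$ and $\xi\in\partial_P h(x)$. This is false in general. Write $E=E_1\cap E_2$ with $E_1=\{v(x)\le z_1\}$ and $E_2=\{h(x)\le z_2\}$; then only the inclusion $N^P_{E_1}(p)+N^P_{E_2}(p)\subseteq N^P_E(p)$ holds.

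For a concrete instance, take $n=1$ and, near some $x_0\in\mathcal{N}_\eps$, set $v(x)=c-|x-x_0|$ (a concave kink, as semiconcave CLFs typically have) and $h(x)=h_0+|x-x_0|$. Then $\partial_P v(x_0)=\emptyset$, so $N^P_{E_1}(p)=\{0\}$ at $p=(x_0,c,h_0)$. A direct computation gives $N^P_E(p)=\{(a,b_1,b_2):\ b_1\le 0,\ b_2\le 0,\ |a|\le |b_2|-|b_1|\}$, which contains $(0,-1,-1)$. For this normal the invariance inequality reads $\tilde w(c)\le\alpha(1-h_0)$, independently of $u$. The inequality is genuinely needed: if it fails, no trajectory from $p$ stays in $E$. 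Yet $\Gamma(x_0)\neq\emptyset$ implies nothing of the kind, because the CLF half of $\Gamma(x_0)$ is vacuous. In this example you can recover the inequality only from $\Gamma(x)\neq\emptyset$ at nearby $x\neq x_0$ (which forces $w_\eps(x)<\alpha(1-h(x))$ there) together with continuity. In general, the missing ingredient is a robust form of compatibility: a common control for proximal subgradients of $v$ and of $h$ taken at nearby but distinct points.

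Neither fallback supplies this. A Dini-derivative argument needs compatibility stated as a common direction for the two lower Dini derivatives. That condition is not implied pointwise by the proximal condition in $\Gamma$ (the same example shows this), and the argument still has to handle the tangent cone of an intersection. In a Clarke--Ledyaev--Sontag--Subbotin sample-and-hold scheme, the inf-convolutions of $v$ and $h$ generate proximal subgradients at two different base points, whereas $\Gamma$ provides a common control only at a single point. Moreover, the slack $\alpha(1-h)$ you intend to use vanishes near $\partial S$, which is exactly where it is needed.

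A smaller, fixable issue: as you define it, $\Gamma'(x)$ keeps the barrier constraint on $S\setminus\mathcal{N}_\eps$, where Assumption \ref{ass: comp} does not make it nonempty. With $z_2(0)=h(x_0)<1-\eps$, the $h$-constraint on $E$ can become active there. Starting instead from $z_2(0)=\max\{h(x_0),1-\eps\}$ keeps $z_2(t)\ge 1-\eps$, so that constraint can be active only on $\mathcal{N}_\eps$.

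For comparison, the paper argues more directly. It uses Lemma \ref{lem: safe_control} for invariance, splices $w_\eps$ on $\mathcal{N}_\eps$ with $w$ on $\mathcal{I}_\eps\setminus\B(0;r)$, and bounds the result below by $\underline w>0$ on the set $A_\delta$, which it asserts is compact. It then applies Clarke's weak decrease theorem to $v$ to get $v(x(t))\le v(x)-\underline w t$ until $\B(0;r)$ is reached, and simply states that this combines with Lemma \ref{lem: safe_control}. That combination runs into the same difficulty you isolated: weak decrease and weak invariance each produce some trajectory, not necessarily the same one. So the paper contains no argument you could borrow here. Your augmented set $E$ is the right framework for making the joint step precise, but as written the simultaneous safety-and-convergence step is unproved.
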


\begin{proof}
Recall notation $\mathcal{N}_{\eps}$. Define $$\mathcal{I}_\eps:=\{x\in S: h(x)<1-\eps\}.$$ By Assumption \ref{ass: comp} and Lemma , for all $x\in\mathcal{N}$, there exists a $\kappa_1(x)\in\Gamma(x)$ such that $$\langle\zeta, f(x)+g(x)\kappa_1(x)\rangle\leq -w_\eps(x),\;\zeta\in\partial_P v(x)$$  For all $x\in\mathcal{I}_\eps\setminus\B(0;r)$, there exists a feedback $\kappa_2(x)$ such that $$\langle\zeta, f(x)+g(x)\kappa_2(x)\rangle\leq -w(x),\;\zeta\in\partial_P v(x)$$   Let $\tilde{w}(x) = w_\eps(x)$ for $x\in\mathcal{N}_\eps$ and $\tilde{w}(x) = w(x)$ for $x\in\mathcal{I}_\eps\setminus\B(0;r)$. Then it can be verified that $\tilde{w}(x)\geq \min\{\min_{x\in A_\delta \cap \mathcal{N}_\eps}w_\eps(x), \min_{x\in A_\delta \cap (\mathcal{I}_\eps\setminus \B(0;r))}w_\eps(x)\}=:\underline{w}>0$, where 
 $\delta\in(0, r)$ and $$A_\delta:=\{z\in S: v(z)\leq v(x)\}\cap \{x\in S: \|x\|\geq \delta\}$$ is compact.  By Clarke's weak decrease theorem applied to $v$, we have $v(x(t)) \leq v(x)- \underline{w}t$ for all $t\geq 0$ until entering $\B(0; r)$. The statement follows by combining this with Lemma \ref{lem: safe_control}.
\end{proof}

\end{document}